\title{\bf Littlewood--Paley--Stein inequalities on $\RCD(K,\infty)$ spaces}
\author{Huaiqian Li\footnote{E-mail address: {\color{blue}huaiqianlee@gmail.com}. }\vspace{3mm}\\
{\footnotesize Center for Applied Mathematics, Tianjin University,
 Tianjin 300072, P. R. China}
}
\date{}
\def\R{\mathbb{R}}
\def\E{\mathbb{E}}
\def\P{\mathbb{P}}
\def\D{\mathbb{D}}
\def\Z{\mathbb{Z}}
\def\d{\textup{d}}
\def\D{\textup{D}}
\def\Z{\textup{Z}}
\def\CD{\textup{CD}}
\def\Ch{\textup{Ch}}
\def\lip{\textup{lip}}
\def\Lip{\textup{Lip}}
\def\RCD{\textup{RCD}}
\def\<{\langle}
\def\>{\rangle}
\def\Proof.{\noindent{\bf Proof. }}
\def\newdot{{\kern.8pt\cdot\kern.8pt}}
\newtheorem{theorem}{Theorem}[section]
\newtheorem{lemma}[theorem]{Lemma}
\newtheorem{definition}[theorem]{Definition}
\theoremstyle{definition}
\begin{document}
\allowdisplaybreaks
\maketitle
\makeatletter 
\renewcommand\theequation{\thesection.\arabic{equation}}
\@addtoreset{equation}{section}
\makeatother 

\begin{abstract}
The $L^p$ boundedness on vertical Littlewood--Paley square functions for heat flows on $\RCD(K,\infty)$ spaces with $K\in\R$ is proved. With regards to the proof, for $1<p\leq 2$, Stein's analytical method is applied, while for $2<p<\infty$, the probabilistic approach in the sense of Ba\~{n}uelos--Bogdan--Luks introduced recently is employed.
\end{abstract}

{\bf MSC 2010:} primary 60J60, 42A61; secondary 42B20, 30L99

{\bf Keywords:}  heat flow; Littlewood--Paley square function; $\RCD(K,\infty)$ space

\section{Introduction and main results}\hskip\parindent
There is a vast amount of study on the boundedness of Littlewood--Paley square functions of various types. Here, we are interested in the $L^p$
boundedness of vertical (i.e., derivative w.r.t. the space variable) ones, since geometric information is closely related. In this aspect, on the one hand, in the celebrated work \cite{St1958} due to Stein, the $L^p$ boundedness of the Littlewood--Paley square function is established for $1<p<\infty$ in the Euclidean space by an analytical approach; see also \cite[Theorem 1, Chapter IV]{St1970}. The result was generalized to other smooth and finite-dimension manifolds; for instance, compact Lie groups (see \cite[Chapter II]{Stein70}) and complete Riemannian manifolds (see \cite{Li2006,CDD}), where, for the case when $1<p\leq 2$, the argument depends on the diffusion property of the second order differential operator. On the other hand, in Meyer's celebrated work \cite{Mey1976}, a probabilistic approach is applied to prove
the Littlewood--Paley--Stein inequality with $2\leq p<\infty$ for the vertical Littlewood--Paley square function defined for general symmetric
Markov process by using the ``carr\'{e} du champ'' operator $\Gamma$ (see also \cite{Mey1981}). Based on the concept of Bakry--Emery
$\Gamma_2$, by developing Meyer's probabilistic method, Bakry \cite{Bakry1987} proved the Littlewood--Paley--Stein inequality (for all
$1<p<\infty$) for a diffusion process on the complete Riemannian manifold under the condition that the Bakry--Emery $\Gamma_2$ is bounded from
below. A closely related work is \cite{ShYo}, where Shigekawa and Yoshida studied the symmetric diffusion process in the metric measure space
setting, but in order to run the $\Gamma_2$ calculus, they assumed the existence of a nice algebra, which is usually difficult to obtain in the study of infinite dimensional diffusions, contained in the domain of the infinitesimal generator of the diffusion process. Some other works we would like to mention here are \cite{Bakry1985,Chen1987,Lohou1987,Sh2002,CD2003}.

In the present work, we investigate the $L^p$ boundedness for vertical Littlewood--Paley square functions for all $1<p<\infty$, namely, the
Littlewood--Paley--Stein inequality, in the setting of metric measure spaces.

Let $(X,d,\mu)$ be a metric measure space in the sense that $(X,d)$ is a separable and complete metric space and $\mu$ is a non-negative Borel
measure on $X$ with full support which is finite on bounded sets. For $1\leq p \leq\infty$, as usual, denote the real $L^p$ space by $L^p(M,\mu)$ and its
norm
\begin{equation*}\|f\|_{L^p(X,\mu)}:=\begin{cases}
\left(\int_X |f(x)|^p\, \d\mu(x)\right)^{1/p},\quad &{1\leq p<\infty},\\

\mathop{\mbox{ess-sup}}\limits_{x\in M} |f(x)|,\quad &{p=\infty},
\end{cases}
\end{equation*}
where $\textup{ess-sup}$ is the essential supremum.

Let $K\in\R$. By using the $K$-convexity of the relative entropy, Lott--Villani \cite{LV2009} and Sturm \cite{Sturm2006a,Sturm2006b} introduced
the notion of curvature-dimension condition as a synthetic notion of Ricci curvature lower bound on the metric measure space. Then the
curvature-dimension condition was strengthened by Ambrosio--Gigli--Savar\'{e} (see \cite{AmbrosioGigliSavare2011b}) by demanding for more Riemannian-like structures and excluding the Finsler structure, which we call the Riemannian curvature-dimension condition (abbrev. $\RCD$), denoted by $\RCD(K,\infty)$; see also \cite{gi2012,AmbrosioGigliSavare2012,agmr2015} as well as Section 2 below. Typical examples of $\RCD(K,\infty)$ spaces are  complete weighted Riemannian manifolds with Bakry--Emery $\Gamma_2$ bounded from below, as well as their limit spaces in the measured Gromov--Hausdorff sense, Alexandorv spaces (with curvature bounded from below), and Hilbert spaces endowed with a log-concave Borel probability measure, and so on.

Let $(X,d,\mu)$ be an $\RCD(K,\infty)$ space with $K\in\R$. There exists a natural notion of heat flow, denoted by $(H_t)_{t\geq0}$, which is
associate to the quasi-regular and strongly local symmetric Dirichlet form induced by the Cheeger energy (see Section 2 below). For $\alpha\geq0$, we set $H_t^\alpha=e^{-\alpha t}H_t$. For every $f\in L^1(X,\mu)\cap L^\infty(X,\mu)$, we define the
vertical Littlewood--Paley square function as
$$\mathcal{G}(f)(x)=\Big(\int_0^\infty |\D H_t^\alpha f|_w^2(x)\,\d t\Big)^{1/2},\quad x\in X,$$
where $|\D f|_w$ is the minimum weak upper gradient of $f$ (see Section 2 below).

The main result of this work is contained in the following theorem.
\begin{theorem}\label{main}
Let $\alpha\geq0$ and let $(X,d,\mu)$ be an $\RCD(K,\infty)$ space with $K\in\R$.
Then, there exists a positive constant $C_p$, depending only on $p$, such that
\begin{itemize}
\item[(1)] if $1<p\leq 2$, then
$$\|\mathcal{G}(f)\|_{L^p(X,\mu)}\leq C_p\|f\|_{L^p(X,\mu)},\quad\mbox{for every }f\in L^2(X,\mu)\cap L^p(X,\mu);$$
\item[(2)] if $\alpha\geq\max\{-2K,0\}$ and $p>2$, then
$$\|\mathcal{G}(f)\|_{L^p(X,\mu)}\leq C_p\|f\|_{L^p(X,\mu)},\quad\mbox{for every }f\in L^2(X,\mu)\cap L^p(X,\mu).$$
\end{itemize}
\end{theorem}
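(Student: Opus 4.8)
The plan is to prove the two ranges by the two different methods announced, after recording the $L^2$ estimate and the structural facts about $\RCD(K,\infty)$ spaces that both arguments rest on. Throughout I write $L$ for the non-negative self-adjoint generator of $(H_t)_{t\ge0}$, so that $H_t=e^{-tL}$, $\partial_tH_t^\alpha f=-(\alpha+L)H_t^\alpha f$, and $\mathcal{E}(g)=\int_X|\D g|_w^2\,\d\mu=\langle Lg,g\rangle$ for $g$ in the form domain; I also use the carr\'e du champ $\Gamma(g)=|\D g|_w^2$ and the Bakry--\'Emery operator $\Gamma_2$, recalling that $\RCD(K,\infty)$ is equivalent to $\BE(K,\infty)$, i.e. $\Gamma_2(g)\ge K\Gamma(g)$, and in particular yields the gradient estimate $\Gamma(H_tg)\le e^{-2Kt}H_t\Gamma(g)$. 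The case $p=2$ is immediate from the energy identity: by self-adjointness and the spectral theorem, $\|\mathcal{G}(f)\|_{L^2}^2=\int_0^\infty\mathcal{E}(H_t^\alpha f)\,\d t=\langle\tfrac{L}{2(\alpha+L)}f,f\rangle\le\tfrac12\|f\|_{L^2}^2$, which also fixes the normalisation used below.

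\textbf{Part (1), $1<p\le2$ (Stein's method).} First I would reduce to $0\le f\in L^2\cap L^\infty$, so that $u:=H_t^\alpha f\ge0$; the sign-changing case follows by running the computation below with the regularised convex function $(u^2+\ee^2)^{p/2}$ in place of $u^p$ and letting $\ee\da0$. The engine is the chain rule for the strongly local form on $\RCD$ spaces, which gives, for $u>0$,
\[
-\partial_t(u^p)-L(u^p)=p\alpha\,u^p+p(p-1)\,u^{p-2}\,\Gamma(u)\ge p(p-1)\,u^{p-2}\,\Gamma(u)\ge0.
\]
Hence, pointwise, $\Gamma(u)\le\frac{1}{p(p-1)}u^{2-p}\big(-\partial_t(u^p)-L(u^p)\big)$, and since $2-p\ge0$ I may bound $u^{2-p}\le (f^\ast)^{2-p}$, where $f^\ast:=\sup_{t\ge0}H_t^\alpha f\le\sup_{t\ge0}H_t f$ is the maximal function. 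Writing $J(x):=\int_0^\infty\big(-\partial_t(u^p)-L(u^p)\big)(x,t)\,\d t\ge0$, integration in $t$ kills the time-boundary term up to a favourable sign and $\int_X L(u^p)\,\d\mu=0$, so that $\int_X J\,\d\mu\le\|f\|_{L^p}^p$. Then
\[
\|\mathcal{G}(f)\|_{L^p}^p\le\frac{1}{(p(p-1))^{p/2}}\int_X (f^\ast)^{p(2-p)/2}\,J^{p/2}\,\d\mu\le C\Big(\int_X(f^\ast)^p\,\d\mu\Big)^{(2-p)/2}\Big(\int_X J\,\d\mu\Big)^{p/2},
\]
by H\"older with conjugate exponents $\frac{2}{2-p}$ and $\frac2p$; the maximal theorem for symmetric sub-Markovian semigroups gives $\|f^\ast\|_{L^p}\le C_p\|f\|_{L^p}$ for $p>1$, and combining the two factors yields the claim. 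The main obstacle here is rigour rather than strategy: justifying the chain rule and the integrations by parts for the merely Sobolev-regular caloric function $u$ on a non-smooth space, and the $\ee\da0$ limit in the sign-changing reduction. Note that no curvature bound is needed, which is why Part (1) holds for every $K\in\R$ and every $\alpha\ge0$.

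\textbf{Part (2), $2<p<\infty$ (probabilistic method of Ba\~nuelos--Bogdan--Luks).} Here I would exploit that the Cheeger-energy form is quasi-regular and strongly local, hence admits an associated $\mu$-symmetric diffusion $(B_t,\P^x)$ with $H_tf(x)=\E^x[f(B_t)]$. The two inputs are the Hardy--Stein identity and the Burkholder--Davis--Gundy inequality. Differentiating $t\mapsto\|H_t^\alpha f\|_{L^p}^p$ and using the chain rule gives the Hardy--Stein identity $p(p-1)\int_0^\infty\!\!\int_X|u|^{p-2}\Gamma(u)\,\d\mu\,\d t\le\|f\|_{L^p}^p$, the $L^2$-level control of the weighted square function. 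To upgrade this to the $L^p$ bound on $\mathcal{G}(f)$ for $p\ge2$, I would pass to the space--time picture: the process $\tilde v_s:=e^{-\alpha s}u(B_s,T-s)$ is, by Dynkin's formula, a martingale (the weight $e^{-\alpha s}$ is precisely what cancels the drift $\alpha\,u\,\d s$), its bracket satisfies $\d\langle\tilde v\rangle_s\propto e^{-2\alpha s}\Gamma(u)(B_s,T-s)\,\d s$, and letting $T\to\infty$ in the stationary (``background radiation'') regime reproduces $\mathcal{G}(f)^2$ as terminal bracket while the terminal value is controlled by $f$. Burkholder--Davis--Gundy together with Doob's inequality and the $\mu$-stationarity of $(B_t)$ then give $\|\mathcal{G}(f)\|_{L^p}=\big\|\langle\tilde v\rangle_\infty^{1/2}\big\|_{L^p}\le C_p\big\|\sup_s|\tilde v_s|\big\|_{L^p}\le C_p'\|f\|_{L^p}$.

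\textbf{Role of the hypotheses and main obstacle.} The curvature bound $\BE(K,\infty)$ and the shift $\alpha\ge\max\{-2K,0\}$ enter exactly in this last step: the gradient estimate $\Gamma(H_t^\alpha f)\le e^{-2(\alpha+K)t}H_t\Gamma(f)$ must decay fast enough for the bracket to be finite and for the $T\to\infty$ limit and the domination of $\mathcal{G}(f)^2$ by $\langle\tilde v\rangle$ to hold with room to spare; the threshold $-2K$, rather than the naive $-K$, I expect to emerge from the $\Gamma_2$ computation $(\partial_t+L)\Gamma(u)=2\Gamma_2(u)-2\alpha\Gamma(u)\ge2(K-\alpha)\Gamma(u)$ together with the quadratic nature of the bracket in the $p$-th moment estimate. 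I anticipate the genuine difficulty of Part (2) to be the rigorous stochastic calculus on a possibly non-smooth $\RCD$ space: constructing and controlling the space--time martingale, justifying Dynkin's and It\^o's formula and the Hardy--Stein identity for the Sobolev-regular caloric function $u$, and identifying the terminal bracket with $\mathcal{G}(f)^2$ after the background-radiation limit. Part (1), by contrast, should be essentially a soft consequence of the diffusion property and the semigroup maximal theorem.
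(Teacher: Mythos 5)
Your proposal follows essentially the same route as the paper in both ranges: for $1<p\le 2$ it is Stein's diffusion-semigroup argument (regularised power function, the chain-rule identity for $(\partial_t-\Delta)$ applied to a convex function of $H_t^\alpha f$, H\"older with exponents $2/(2-p)$ and $2/p$, and the maximal ergodic theorem), and for $p>2$ it is the Ba\~nuelos--Bogdan--Luks martingale argument (the space--time martingale $\mathcal{M}_f(s)=H_{T-s}f(\Z_s)-H_Tf(\Z_0)$, Burkholder--Davis--Gundy, and the Bakry--\'Emery gradient estimate to dominate $\mathcal{G}$ by a conditioned square function), with the hypothesis $\alpha\ge\max\{-2K,0\}$ entering exactly where you locate it. Two imprecisions in Part (2) are worth flagging. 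First, $\|\mathcal{G}(f)\|_{L^p}$ is not \emph{equal} to $\big\|\langle\tilde v\rangle_\infty^{1/2}\big\|_{L^p}$: the correct mechanism is that the conditioned square function $\mathcal{G}_\star(f)(x)^2=\int_0^\infty H_t^\alpha\big(|\D H_t^\alpha f|_w^2\big)(x)\,\d t$ (truncated at $T$) is an average over $z$ of $\E_z\big[\langle\mathcal{M}_f\rangle(T)\,\big|\,\Z_T=x\big]$ against the heat kernel $h_T(x,z)$, and Jensen's inequality for $r\mapsto r^{p/2}$ --- this is precisely where $p\ge 2$ is used --- converts $\int_X\mathcal{G}_{\star,T}(f)^p\,\d\mu$ into an upper bound by the unconditional $p/2$-moment of the bracket, to which BDG applies. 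Second, the pointwise domination $\mathcal{G}(f)\le\sqrt2\,\mathcal{G}_\star(f)$ is obtained by writing $H_t^\alpha f=H_{t/2}^\alpha(H_{t/2}^\alpha f)$ and applying $|\D H_sg|_w^2\le e^{-2Ks}H_s(|\D g|_w^2)$ at time $s=t/2$, which is where the threshold $\alpha\ge-2K$ (rather than $-K$) actually comes from --- the factor $2$ is due to the time-halving together with the weight $e^{-\alpha t}$, not to a $\Gamma_2$ bracket computation. Neither point changes the strategy, which is the paper's.
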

We remark that, on the $\RCD(K,\infty)$ space $(X,d,\mu)$ with $K\in\R$, the horizontal (i.e., derivative w.r.t. the time variable) Littlewood--Paley square function, defined as
$$g_{k}(f)(x)=\Big(\int_0^\infty \Big|t^{2k}\frac{\partial^k}{\partial t^k}H_t^\alpha f(x)\Big|^2\,\frac{dt}{t} \Big)^{1/2},\quad x\in X,$$
for every $f\in L^p(X,\mu)$ and all $k=1,2,\cdots$, is bounded in $L^p(X,\mu)$ for all $1<p<\infty$ by the result in \cite{CRW}. However, we are not allowed to apply \cite[COROLLARY 1]{Stein70} on Page 120 directly to derive the $L^p$ boundedness of $g_k$ due to the lack of stochastic completeness for $(H^\alpha_t)_{t>0}$ in general; indeed, when $\alpha>0$, $H_t^\alpha1<1$ for every $t>0$ (see Section 2). Moreover, inspired by \cite[Remark 1.3 (ii)]{CDD}, for every $f\in L^1(X,\mu)\cap L^\infty(X,\mu)$, if we define the square function as
$$\widetilde{\mathcal{G}}(f)(x)=\Big(\int_0^\infty t\big|\D e^{-t\sqrt{\alpha-\Delta}}f)\big|^2_w(x)\,\d t\Big),\quad x\in X,$$
then $\widetilde{\mathcal{G}}(f)$ is pointwise dominated by $\mathcal{G}(f)$. As a result, the $L^p$ boundedness of $\mathcal{G}$ implies the $L^p$ boundedness of $\widetilde{\mathcal{G}}$.

In Section 2, we recall some basic notions and known results following the works due to Ambrosio--Gigli--Savar\'{e}.  In the subsequent sections, we present the proof of Theorem \ref{main}, which is divided into two parts: Section 3 on the $L^p$ boundedness for $1<p\leq2$, and Section 4 on the $L^p$ boundedness for $2< p<\infty$.

\section{Preliminaries}\hskip\parindent
Let $(X,d)$ be a metric space. Denote $\Lip(X,d)$ the class of all Lipschitz continuous functions on $(X,d)$. Given a function $f:X\rightarrow\R$, define its local Lipschitz constant $\lip(f): X\rightarrow[0,\infty]$ by
\begin{equation*}
\lip(f)(x)=
\begin{cases}
\limsup\limits_{y\rightarrow x}\frac{|f(y)-f(x)|}{d(x,y)},\quad &{\mbox{if }x\mbox{ is not isolated}},\\
0,\quad &{\hbox{otherwise}}.
\end{cases}
\end{equation*}

Let $(X,d)$ be a separable and complete metric space, and let $\mu$ be a non-negative Borel measure on $X$ which has full support and is finite on bounded subsets of $X$. The triple $(X,d,\mu)$ is called a metric measure space.

The Sobolev space $W^{1,2}(X):=W^{1,2}(X,d,\mu)$ over the metric measure space $(X,d,\mu)$ is defined as the class of all functions $f\in L^2(X,\mu)$ for which there exists a sequence of functions $(f_n)_{n\geq1}\subseteq L^2(X,\mu)\cap\Lip(X,d)$ such that $f_n\rightarrow f$ and $\sup_{n\geq1}\int_X\lip(f_n)^2\,\d\mu<\infty$. For every $f\in W^{1,2}(X)$, denote its minimal weak upper gradient by $|\D f|_w$. We refer to \cite{AGS2014,gi2012} for the existence and properties on the minimal weak upper gradient. It turns out that $W^{1,2}(X)$ equipped with the norm $\|\cdot\|_{W^{1,2}(X)}$, defined by
$$\|f\|^2_{W^{1,2}(X)}:=\|f\|_{L^2(X,\mu)}^2+\||\D f|_w\|_{L^2(X,\mu)}^2,$$
is a Banach space, but not a Hilbert space in general.

In order to rule out Finsler structures, the Riemannian curvature-dimension condition (abbrev. $\RCD$) is introduced in \cite{AmbrosioGigliSavare2011b} (with the reference measure a probability measure) and then in \cite{agmr2015} (with the
reference measure as the same as $\mu$), which is a strengthening of the curvature-dimension condition $\CD(K,\infty)$ in the sense of Lott--Sturm--Villani (see \cite{Sturm2006a,LV2009}) by further requiring the Banach space $(W^{1,2}(X),\|\cdot\|_{W^{1,2}(X)})$ to be a Hilbert space.
\begin{definition}
Let $K\in \R$ and let $(X,d,\mu)$ be a metric measure space. We say that $(X,d,\mu)$ is an $\RCD(K,\infty)$ space (or $(X,d,\mu)$ satisfies the
$\RCD(K,\infty)$ condition) if it satisfies $\CD(K,\infty)$ and $(W^{1,2}(X),\|\cdot\|_{W^{1,2}(X)})$ is a Hilbert space.
\end{definition}

From now on, let $(X,d,\mu)$ be an $\RCD(K,\infty)$ space with $K\in\R$. Then, there is a natural quadratic form induced by the Cheeger energy. Recall that the Cheeger energy functional $\Ch: L^2(X,\mu)\rightarrow[0,\infty]$ is defined by
$$\Ch(f)=\frac{1}{2}\int_X|\D f|_w^2\,\d\mu,\quad f\in W^{1,2}(X).$$
By polarization, we can define a bilinear map $\Gamma: W^{1,2}(X)\times W^{1,2}(X)\rightarrow\R$ by
$$\Gamma(f,g)=\frac{1}{4}\big(|\D(f+g)|_w^2-|\D(f-g)|_w^2\big),\quad\mbox{for every }f,g\in W^{1,2}(X).$$
Hence, $\Ch$ induces a quadratic form $\mathcal{E}:W^{1,2}(X)\times W^{1,2}(X)\rightarrow\R$ such that, for any $f,g\in W^{1,2}(X)$,
$\mathcal{E}(f,f)=2\Ch(f)$ and
$$\mathcal{E}(f,g)=\int_X\Gamma(f,g)\,\d\mu.$$
Indeed, $(\mathcal{E}, W^{1,2}(X))$ is a strongly local, quasi-regular symmetric Dirichlet form (see \cite[Section
6.2]{AmbrosioGigliSavare2011b} and \cite[Section 7.2]{agmr2015}). We denote its $L^2$-generator by $\Delta$ with domain $\mathcal{D}(\Delta)$ belong to $L^2(X,\mu)$, which
is a self-adjoint and non-positive definite linear operator such that, for every $f\in\mathcal{D}(\Delta)$,
$$\mathcal{E}(f,\psi)=-\int_X \psi\Delta f\,\d\mu,\quad\mbox{for every }\psi\in W^{1,2}(X).$$
From the general theory of Dirichlet forms (see e.g. \cite{BouleauHirsch,MaRo1992,FOT2011} for more details),
$\mathcal{D}(\Delta)$ is dense in $W^{1,2}(X)$ as well as in $L^2(X,\mu)$. The heat flow uniquely associated to $(\mathcal{E},
W^{1,2}(X))$ is denoted by $(H_t)_{t\geq0}$, namely, $H_t=e^{t\Delta}$ in the sense of functional analysis, and the heat kernel by $(h_t)_{t\geq0}$ (see \cite[Section 6.1]{AmbrosioGigliSavare2011b} and \cite[Section 7.1]{agmr2015} for the existence of the later), which enjoy nice properties (see \cite[Section 6.1]{AmbrosioGigliSavare2011b},
\cite{Savare2014}, \cite[Section 7.1]{agmr2015}, \cite{AmbrosioGigliSavare2012} and \cite{Li2016}); for instance, the mass preserving property, i.e.,
\begin{equation}\label{mass-pres}\int_X H_tf\,\d\mu=\int_X f\,\d\mu,\quad\mbox{for every }f\in L^1(X,\mu)\cap L^2(X,\mu),\end{equation}
 the density of
$\cup_{t>0}H_t(L^\infty(X,\mu))$ in $\mathcal{D}(\Delta)$, and the stochastic completeness, i.e., for any $t>0$, $H_t1(x)=1$ for $\mu$-a.e. $x\in X.$

Note that the doubling property and the weak local Poincar\'{e} inequality of type $(1,1)$ holds on $\RCD(K,\infty)$ spaces with $K\geq0$. However, the both may fail on $\RCD(K,\infty)$ spaces with $K<0$.

For $\alpha\geq0$, let $(H_t^\alpha)_{t\geq0}$ be the semigroup generated by $(\Delta-\alpha,\mathcal{D}(\Delta))$ in $L^2(X,\mu)$. Then $H_t^\alpha=e^{-\alpha t}H_t$, $t\geq0$, which is also a strongly continuous and $L^p(X,\mu)$-contraction semigroup for all $p\in[1,\infty]$. However, for $\alpha>0$, $H_t^\alpha 1<1$ $\mu$-a.e. for all $t>0$.

Now for $f\in L^1(X,\mu)\cap L^\infty(X,\mu)$, we define the vertical Littlewood--Paley square function by
$$\mathcal{G}(f)(x)=\Big(\int_0^\infty |\D H_t^\alpha f|_w^2(x)\,\d t\Big)^{1/2},\quad x\in X.$$
It is easy to know that if $\alpha\geq0$, then
\begin{eqnarray}\label{L2-bound}
\|\mathcal{G}(f)\|_{L^2(X,\mu)}\leq\frac{\sqrt{2}}{2}\|f\|_{L^2(X,\mu)},\quad\mbox{for every }f\in L^2(X,\mu).
\end{eqnarray}
Indeed, letting $\{E_\lambda: 0\leq\lambda<\infty\}$ be the spectral family associated with $-\Delta$, we have that
\begin{eqnarray*}
&&\int_M|\mathcal{G}(f)|^2\,d\mu=\int_M\int_0^\infty|\D H_t^\alpha f|_w^2\,\d t \d\mu=\int_0^\infty\Big(\int_M -\Delta(H_t^\alpha f) H_t^\alpha f\,\d\mu\Big)\,\d t\\
&=&\int_0^\infty\int_0^\infty \lambda e^{-2t(\alpha+\lambda)}\,\d\langle E_\lambda f,E_\lambda f\rangle\,\d t=\int_{0^+}^\infty\int_0^\infty \lambda
e^{-2t(\alpha+\lambda)}\,\d t\,\d\langle E_\lambda f,E_\lambda f\rangle\\
&\leq&\frac{1}{2}\int_{0^+}^\infty\d\langle E_\lambda f,E_\lambda f\rangle\leq\frac{1}{2}\|f\|^2_{L^2(X,\mu)}.
\end{eqnarray*}

\section{$L^p$ boundedness for $1<p\leq2$}\hskip\parindent
We first borrow a maximal inequality on the heat flows $(H_t)_{t>0}$ from \cite[Page 73]{Stein70} (see also \cite[Lemma 2.7]{LiWang16}); in fact, the explicit constant for $1<p<\infty$ in the present form is from \cite[Theorem 3.3]{Shig2004}, where the method of proof still works in the present setting.
\begin{lemma}\label{max-ergo-ineq}
Let $1<p\leq\infty$. For every $f\in L^p(X,\mu)$,
$$\big\|\sup_{t>0}|H_tf|\big\|_{L^p(X,\mu)}\leq \frac{p}{p-1}\|f\|_{L^p(X,\mu)},$$
where $p/(p-1):=1$ if $p=\infty$.
\end{lemma}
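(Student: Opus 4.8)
The plan is to observe that the statement uses nothing about $(X,d,\mu)$ beyond the abstract structural properties of a \emph{symmetric Markov (diffusion) semigroup}, so that Stein's maximal theorem applies verbatim and the sharp constant is tracked as in \cite[Theorem 3.3]{Shig2004}. First I would record the facts established in Section~2: on the $\RCD(K,\infty)$ space, $(H_t)_{t\geq0}$ is a strongly continuous, self-adjoint (on $L^2(X,\mu)$) semigroup which is a contraction on $L^p(X,\mu)$ for every $p\in[1,\infty]$, is positivity preserving (being associated with a Dirichlet form), and is conservative, i.e. $H_t1=1$ $\mu$-a.e. for all $t>0$ by stochastic completeness. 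Since no geometry of $(X,d,\mu)$ beyond these facts enters, the argument of \cite[Page~73]{Stein70} carries over without change. A preliminary remark that $t\mapsto H_tf$ is continuous (by strong continuity), so the supremum may be taken over a countable dense set of parameters, makes $\sup_{t>0}|H_tf|$ measurable.

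The case $p=\infty$ is immediate: by positivity and conservativeness, $|H_tf|\leq H_t|f|\leq\|f\|_{L^\infty(X,\mu)}\,H_t1=\|f\|_{L^\infty(X,\mu)}$ for every $t>0$, whence $\big\|\sup_{t>0}|H_tf|\big\|_{L^\infty(X,\mu)}\leq\|f\|_{L^\infty(X,\mu)}$, which is the asserted bound with $p/(p-1):=1$.

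For $1<p<\infty$ I would reduce the continuous maximal function to a martingale maximal function. Since $2t$ ranges over $(0,\infty)$ as $t$ does, one has $\sup_{t>0}|H_tf|=\sup_{t>0}|H_{2t}f|$, and self-adjointness with the semigroup law gives $H_{2t}=H_t^{\ast}H_t$. By Rota's dilation theorem, whose construction uses precisely symmetry, the Markov property and conservativeness, the symmetric Markovian operators $H_{2t}$ are realized on an auxiliary probability space so that $(H_{2t}f)_{t>0}$ becomes a reversed martingale $\big(\E[\tilde f\mid\mathcal{F}_t]\big)_{t>0}$, where $\tilde f$ is a norm-preserving lift of $f$ (so $\|\tilde f\|_p=\|f\|_{L^p(X,\mu)}$) and $(\mathcal{F}_t)_{t>0}$ is a decreasing family of $\sigma$-algebras. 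Consequently $\sup_{t>0}|H_{2t}f|$ is dominated by the (reversed) martingale maximal function of $\tilde f$, and Doob's $L^p$ maximal inequality, whose optimal constant is exactly $p/(p-1)$, yields $\big\|\sup_{t>0}|H_{2t}f|\big\|_{p}\leq\frac{p}{p-1}\|\tilde f\|_p=\frac{p}{p-1}\|f\|_{L^p(X,\mu)}$. Combined with the reparametrization identity above, this gives the claim.

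The main obstacle is the construction underlying Rota's representation in this abstract measure-theoretic setting: one must produce the dilation and the decreasing filtration realizing $H_{2t}$ as genuine conditional expectations, which is exactly where conservativeness ($H_t1=1$) is indispensable, and is the reason the lemma is stated for $(H_t)$ rather than for the strictly submarkovian $(H_t^\alpha)$ with $\alpha>0$. Once this dilation is in place, the sharp constant $p/(p-1)$ is inherited directly from Doob's inequality and no further analysis of $(X,d,\mu)$ is required.
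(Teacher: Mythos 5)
Your proposal is correct and follows essentially the route the paper relies on: the paper gives no proof of its own, instead citing Stein's maximal theorem and \cite[Theorem 3.3]{Shig2004} for the explicit constant $p/(p-1)$, and that cited argument is exactly the one you describe (reduce to $H_{2t}=H_t^{\ast}H_t$, apply Rota's dilation to realize these as reversed-martingale conditional expectations, and invoke Doob's $L^p$ maximal inequality). Your identification of conservativeness and symmetry as the structural inputs, and of the $p=\infty$ case as trivial from sub-Markovianity, matches the intended justification.
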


The main result in this section is presented in the following theorem which is exact Theorem \ref{main} (1). The idea of proof is essentially from Stein \cite[Chapter II]{Stein70} in the the setting of compact Lie groups. We should mention that Stein's method has been non-trivially generalised to the setting of non-local Dirichlet forms under some mild conditions in \cite{LiWang16} very recently. For notational convenience, we set
$$\mathbb{V}^\infty(X):=\{g\in W^{1,2}(X)\cap L^\infty(X,\mu):\, |\D H_tg|_w\in L^\infty(X,\mu)\}.$$
\begin{theorem}\label{p<2}
Let $\alpha\geq0$ and $1<p\leq2$. Suppose that $(X,d,\mu)$ is an $\RCD(K,\infty)$ space with $K\in\R$.
Then, there exists a positive constant $C_p$, depending only on $p$, such that
\begin{equation}\label{thm3.2-1}
\|\mathcal{G}(f)\|_{L^p(X,\mu)}\leq C_p\|f\|_{L^p(X,\mu)},\quad\mbox{for every }f\in L^2(X,\mu)\cap L^p(X,\mu).
\end{equation}
\end{theorem}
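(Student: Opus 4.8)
The plan is to adapt Stein's semigroup argument to the present Dirichlet-form setting, relying only on the chain rule for the carr\'e du champ $\Gamma$, the generator identity $\partial_t H_t^\alpha f=\Delta H_t^\alpha f-\alpha H_t^\alpha f$, and the maximal inequality of Lemma \ref{max-ergo-ineq}. First I would reduce to nonnegative $f$. Since the minimal weak upper gradient is subadditive and $H_t^\alpha$ is linear, $|\D H_t^\alpha f|_w\le |\D H_t^\alpha f^+|_w+|\D H_t^\alpha f^-|_w$ for every $t$, so Minkowski's inequality in $L^2((0,\infty),\d t)$ gives $\mathcal{G}(f)\le\mathcal{G}(f^+)+\mathcal{G}(f^-)$ pointwise; it therefore suffices to prove \eqref{thm3.2-1} for $0\le f\in L^2(X,\mu)\cap L^p(X,\mu)$, and by a density argument I may further assume $f\in L^1(X,\mu)\cap L^\infty(X,\mu)$ with $f\ge0$, so that $u_t:=H_t^\alpha f$ satisfies $0\le u_t\le\|f\|_{L^\infty(X,\mu)}$ and $u_t\in\mathcal{D}(\Delta)$ for $t>0$.

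The core is a pointwise inequality coming from H\"older's inequality together with $p\le2$. Writing $u^*(x):=\sup_{t>0}H_t|f|(x)$ and using $0\le u_t\le H_t f\le u^*$ $\mu$-a.e. and $2-p\ge0$, I would estimate, for $\mu$-a.e. $x$,
\begin{equation*}
\mathcal{G}(f)^2(x)=\int_0^\infty |\D u_t|_w^2(x)\,\d t=\int_0^\infty u_t^{2-p}\,u_t^{p-2}|\D u_t|_w^2(x)\,\d t\le \big(u^*(x)\big)^{2-p}\int_0^\infty u_t^{p-2}|\D u_t|_w^2(x)\,\d t.
\end{equation*}
Denoting the last integral by $G(x)$, raising to the power $p/2$, and applying H\"older's inequality with conjugate exponents $2/(2-p)$ and $2/p$, I obtain
\begin{equation*}
\|\mathcal{G}(f)\|_{L^p(X,\mu)}^p\le \Big(\int_X G\,\d\mu\Big)^{p/2}\,\big\|u^*\big\|_{L^p(X,\mu)}^{p(2-p)/2}.
\end{equation*}
By Lemma \ref{max-ergo-ineq} applied to $f\ge0$, $\|u^*\|_{L^p(X,\mu)}\le\frac{p}{p-1}\|f\|_{L^p(X,\mu)}$, so it remains only to bound $\int_X G\,\d\mu$ by a multiple of $\|f\|_{L^p(X,\mu)}^p$; the two exponents then combine to give exactly $\|f\|_{L^p(X,\mu)}^p$.

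For the energy bound I would compute $\int_X G\,\d\mu$ through the form. By the chain rule for $\Gamma$, $u_t^{p-2}|\D u_t|_w^2=u_t^{p-2}\Gamma(u_t,u_t)=\frac{1}{p-1}\Gamma(u_t^{p-1},u_t)$, and since $u_t\in\mathcal{D}(\Delta)$ with $\partial_t u_t=\Delta u_t-\alpha u_t$,
\begin{equation*}
(p-1)\int_X u_t^{p-2}\Gamma(u_t,u_t)\,\d\mu=\mathcal{E}(u_t^{p-1},u_t)=-\int_X u_t^{p-1}\Delta u_t\,\d\mu=-\frac{1}{p}\frac{\d}{\d t}\|u_t\|_{L^p(X,\mu)}^p-\alpha\|u_t\|_{L^p(X,\mu)}^p.
\end{equation*}
Integrating in $t\in(0,\infty)$ and using that $t\mapsto\|u_t\|_{L^p(X,\mu)}^p$ is nonincreasing with value $\|f\|_{L^p(X,\mu)}^p$ at $t=0$, the $\alpha$-term is nonpositive and the boundary term contributes at most $\|f\|_{L^p(X,\mu)}^p$, whence $\int_X G\,\d\mu\le\frac{1}{p(p-1)}\|f\|_{L^p(X,\mu)}^p$. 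Substituting into the previous display yields \eqref{thm3.2-1} with $C_p=\big(\frac{1}{p(p-1)}\big)^{1/2}\big(\frac{p}{p-1}\big)^{(2-p)/2}$, depending only on $p$.

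The main obstacle is that, for $1<p<2$, the weight $u_t^{p-2}$ is singular where $u_t$ vanishes and $u_t^{p-1}$ need not belong to $W^{1,2}(X)$, so the chain-rule and integration-by-parts steps above are only formal. I would make them rigorous by replacing $u_t^{p-1}$ with the truncated function $(u_t+\ee)^{p-1}-\ee^{p-1}$, which is a bounded Lipschitz function of $u_t$ vanishing at $0$ and hence lies in $W^{1,2}(X)$; the chain rule for $\Gamma$ and the identity $\mathcal{E}(\,\cdot\,,u_t)=-\int_X(\cdot)\,\Delta u_t\,\d\mu$ are available in the $\RCD(K,\infty)$ setting for such functions. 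Running the computation with this regularization produces the same estimate with $(u_t+\ee)^{p-2}$ in place of $u_t^{p-2}$, together with harmless lower-order terms controlled via the mass-preservation property \eqref{mass-pres}; letting $\ee\downarrow0$ and invoking monotone/dominated convergence recovers the bound for $G$. Justifying the time-differentiability of $t\mapsto\|u_t\|_{L^p(X,\mu)}^p$ and the validity of these manipulations (using the smoothing of $(H_t)_{t>0}$ and the $L^p$-contractivity of $(H_t^\alpha)_{t\ge0}$) is where the technical care concentrates, but it proceeds by the now-standard calculus on $\RCD$ spaces and the non-local adaptation of Stein's method in \cite{LiWang16}.
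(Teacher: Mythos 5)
Your proposal is correct and follows essentially the same route as the paper: Stein's semigroup argument with the pointwise splitting $\mathcal{G}(f)^2\le (\sup_t H_tf)^{2-p}\int_0^\infty u_t^{p-2}|\D u_t|_w^2\,\d t$, H\"older's inequality with exponents $2/(2-p)$ and $2/p$, the maximal inequality of Lemma \ref{max-ergo-ineq}, and the regularization $(s+\ee)^{p}$-type weight to handle the singularity at $u_t=0$ (the paper's $\theta_\ee(s)=(s+\ee)^p-\ee^p$ plays exactly the role of your $(u_t+\ee)^{p-1}-\ee^{p-1}$). The only cosmetic difference is that you obtain the bound on $\int_X G\,\d\mu$ by integrating by parts in space first via $\mathcal{E}(u_t^{p-1},u_t)$ and differentiating $\|u_t\|_{L^p(X,\mu)}^p$ in time, whereas the paper runs the pointwise diffusion identity $(\partial_t-\Delta)\theta_\ee(u_t)=-\alpha\theta'_\ee(u_t)u_t-\theta''_\ee(u_t)|\D u_t|_w^2$ and then integrates over $X\times(0,\infty)$ using mass preservation --- an equivalent bookkeeping of the same computation.
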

\begin{proof}
We only need to consider the case when $1<p<2$ since \eqref{L2-bound}. We may reduce the problem to the case when $f$ is non-negative. Indeed, if $f$ is signed then $f=f^+-f^-$ and
$|\D H^\alpha_tf|_w\leq |\D H^\alpha_t(f^+)|_w + |\D H^\alpha_t(f^-)|_w$, where $f^+:=\max\{f, 0\}$ and $f^-:=\{-f,0\}$; hence,
$$\|\mathcal{G}(f)\|^p_{L^p(X,\mu)}\leq 2^{p-1}\big(\|\mathcal{G}(f^+)\|^p_{L^p(X,\mu)}+\|\mathcal{G}(f^-)\|^p_{L^p(X,\mu)}\big).$$

Let $0\leq f\in L^1(X,\mu)\cap L^\infty(X,\mu)$. By the the property of the heat flow (see e.g. \cite[Section 6]{AmbrosioGigliSavare2011b} and  \cite[Section 7]{agmr2015}), we see that $0\leq H_tf\in \mathcal{D}(\Delta)\cap\Lip(X,d)\cap L^\infty(X,\mu)$ for every $t>0$.
By \cite[Corollary 3.2]{AmbrosioGigliSavare2012}, we have that
$$|\D H_tf|^2_w\leq\frac{2K}{e^{2Kt}-1} H_t(f^2),\quad \mu\mbox{-a.e. in }X,$$
which implies that $|\D H_tf|_w\in L^\infty(X,\mu)$ for every $t>0$. Thus, $0\leq H^\alpha_tf\in \mathcal{D}(\Delta)\cap
\mathbb{V}^\infty(X)\cap \Lip(X,d)$ for every $t>0$.

Let $\epsilon>0$. Define a function $\theta_\epsilon: \R_+\rightarrow\R_+$ such that $\theta_\epsilon(s)=(s+\epsilon)^p-\epsilon^p$. Then, it
is immediate to see that $\theta_\epsilon(0)=0$ and $\theta_\epsilon\in C^2(\R_+,\R_+)$. From \cite[Corollary 6.1.4, Chapter I]{BouleauHirsch},
we deduce that $\theta_\epsilon(H^\alpha_tf)\in \mathcal{D}(\Delta)\cap \mathbb{V}^\infty(X)\cap \Lip(X,d)$ for every $t>0$. (Note that here we do not need the boundedness on the derivatives of $s\mapsto\theta_\epsilon(s)$, since $\theta_\epsilon$ is composed with bounded functions.) Hence, by the chain rule (see e.g. \cite{gi2012}),
\begin{eqnarray*}
&&\Big(\frac{\partial}{\partial t}-\Delta\Big)\theta_\epsilon(H^\alpha_tf)\\
&=&\theta'_\epsilon(H^\alpha_tf)\frac{\partial}{\partial t}H_t^\alpha f-
\big[\theta'_\epsilon(H^\alpha_tf)\Delta H^\alpha_tf+\theta''_\epsilon(H^\alpha_tf)|\D H^\alpha_t f|_w^2\big]\\
&=&-\alpha\theta'_\epsilon(H^\alpha_tf) H^\alpha_tf - \theta''_\epsilon(H^\alpha_tf)|\D H^\alpha_tf|^2_w.
\end{eqnarray*}
Since $\theta_\epsilon''(s)=p(p-1)(s+\epsilon)^{p-2}$, which is positive for any $s\in\R_+$, we have that $$\big(\frac{\partial}{\partial t}-\Delta\big)\theta_\epsilon(H^\alpha_tf)+\alpha\theta'_\epsilon(H^\alpha_tf) H^\alpha_tf\leq0,$$ and
$$|\D H^\alpha_tf|^2_w=\frac{1}{\theta''_\epsilon(H^\alpha_tf)}\Big[\Big(\Delta-\frac{\partial}{\partial t}\Big)\theta_\epsilon(H^\alpha_tf)-\alpha \theta'_\epsilon(H^\alpha_tf)H^\alpha_tf\Big].$$
Then
\begin{eqnarray*}
\mathcal{G}(f)(x)^2&=&\int_0^\infty |\D H^\alpha_tf|^2_w(x)\,\d t\\
&=&\int_0^\infty \frac{1}{\theta''_\epsilon(H^\alpha_tf(x))}\Big[\Big(\Delta-\frac{\partial}{\partial t}\Big)\theta_\epsilon(H^\alpha_tf(x))-\alpha \theta'_\epsilon(H^\alpha_tf(x))H^\alpha_tf(x)\Big]\,\d t\\
&\leq& \sup_{t>0}\Big(\frac{1}{\theta''_\epsilon(H^\alpha_tf(x))}\Big)I_\epsilon(x),
\end{eqnarray*}
where
$$I_\epsilon(x):=\int_0^\infty \Big[\Big(\Delta-\frac{\partial}{\partial t}\Big)\theta_\epsilon(H^\alpha_tf(x))-\alpha \theta'_\epsilon(H^\alpha_tf(x))H^\alpha_tf(x)\Big]\,\d t\geq0.$$

By H\"{o}lder's inequality, we derive that
\begin{eqnarray*}
\int_X \mathcal{G}(f)(x)^p\,\d\mu(x)&\leq& \int_X \sup_{t>0}\Big(\frac{1}{\theta''_\epsilon(H^\alpha_tf(x))}\Big)^{p/2}I_\epsilon(x)^{p/2}\,\d\mu(x)\\
&\leq&\Big(\int_X
\sup_{t>0}\Big(\frac{1}{\theta''_\epsilon(H^\alpha_tf)}\Big)^{p/(2-p)}\,\d\mu\Big)^{1-p/2}\Big(\int_XI_\epsilon(x)\,\d\mu(x)\Big)^{p/2}.
\end{eqnarray*}
Since  $\theta'(s)=p(s+\epsilon)^{p-1}>0$ for any $s\in\R_+$, the term
\begin{eqnarray*}
\int_XI_\epsilon(x)\,\d\mu(x)&=&\int_X\int_0^\infty \Delta \theta_\epsilon(H^\alpha_tf)\,\d t\d\mu
-\int_X\int_0^\infty \frac{\partial}{\partial t}\theta_\epsilon(H^\alpha_tf)\,\d t\d\mu\\
&&-\alpha\int_0^\infty \theta_\epsilon'(H^\alpha_tf)H^\alpha_tf\,\d t\\
&\leq&\int_X\int_0^\infty \Delta \theta_\epsilon(H^\alpha_tf)\,\d t\d\mu+\int_X \theta_\epsilon(f)\,\d\mu\\
&=&\int_X \theta_\epsilon(f)\,\d\mu,
\end{eqnarray*}
where we also used Fubini's theorem and the fact that $\int_X \Delta \theta_\epsilon(H^\alpha_tf)\, \d\mu=0$ due to the mass preserving property \eqref{mass-pres} in the last inequality.
Note that $\int_X \theta_\epsilon(f)\,\d\mu<\infty$, which comes from the fact that $\theta_\epsilon(s)\leq ps(s+\epsilon)^{p-1}$ for any $s\in\R_+$
and $0\leq f\in L^1(X,\mu)\cap L^\infty(X,\mu)$. Hence, letting $\epsilon\rightarrow0^+$, by the monotone convergence theorem and the dominated
convergence theorem, we immediately arrive at
\begin{eqnarray*}
\int_X \mathcal{G}(f)^p\,\d\mu&\leq&\lim_{\epsilon\rightarrow0^+}
\Big(\int_X\sup_{t>0}\Big(\frac{1}{\theta''_\epsilon(H^\alpha_tf)}\Big)^{p/(2-p)}\,\d\mu\Big)^{1-p/2}\Big(\int_X
\theta_\epsilon(f)\,\d\mu\Big)^{p/2}\\
&=&[p(p-1)]^{-p/(2-p)} \Big(\int_X\sup_{t>0}(H^\alpha_tf)^p\,\d\mu\Big)^{(2-p)/2}\|f\|_{L^p(X,\mu)}^{p^2/2}\\
&\leq& C_p\|f\|_{L^p(X,\mu)}^{p},
\end{eqnarray*}
where Lemma \ref{max-ergo-ineq} is applied in the last inequality. Thus, we prove \eqref{thm3.2-1} for $f\in L^1(X,\mu)\cap L^\infty(X,\mu)$.

For $f\in  L^2(X,\mu)\cap L^p(X,\mu)$, we may choose a sequence $(f_n)_{n\geq1}$ from $L^1(X,\mu)\cap L^\infty(X,\mu)$ such that $f_n\rightarrow f$ in $L^2(X,\mu)$ as $n\rightarrow\infty$. Then, for every $t>0$, $H^\alpha_tf_n\rightarrow H^\alpha_tf$ in $W^{1,2}(X)$ as $n\rightarrow\infty$. Hence, up to subsequence, for every $t>0$, $|\D H^\alpha_tf_n|_w\rightarrow |\D H^\alpha_tf|_w$ $\mu$-a.e. as $n\rightarrow\infty$. Therefore, by Fatou's lemma, we complete the proof.
\end{proof}

\section{$L^p$ boundedness for $2<p<\infty$}\hskip\parindent
Let $K\in\R$ and $(X,d,\mu)$ be an $\RCD(K,\infty)$ space. Since $(\mathcal{E}, W^{1,2}(X))$ is a stochastically complete, strongly local
 and quasi-regular Dirichlet form, from \cite[Theorem 1.11, Chapter V]{MaRo1992},  there exists a unique diffusion process $\{(\Z_t)_{t\geq0},
(\P_x)_{x\in X}\}$ defined on a probability space such that $H_tf(x)=\E_x[f(\Z_t)]$ for any $t\geq0$, $x\in X$ and non-negative Borel function $f$ on $X$, and
$$\P_x\big(t\mapsto \Z_t\mbox{ is continuous for }t\in[0,\infty)\big)=1,\quad\mbox{for every }x\in X,$$
where $\E_x$ denotes the expectation with respect to $\P_x$. See also \cite[Theorem 6.8]{AmbrosioGigliSavare2011b} and \cite[Theorem 7.5]{agmr2015}.

Now we adapt the probabilistic approach from \cite{BBL2016},
where the $L^p$ boundedness of the Littlewood--Paley square function for L\'{e}vy processes in the Euclidean space is studied.

Fix an arbitrary number $T>0$ and let $f\in L^1(X,\mu)\cap L^\infty(X,\mu)$. Define a stochastic processes $\mathcal{M}_f=(\mathcal{M}_f(t))_{0\leq t\leq T}$ by
$$\mathcal{M}_f(t)=H_{T-t}f(\Z_t)-H_Tf(\Z_0),\quad 0\leq t\leq T.$$
Denote the natural filtration of the process $(\Z_t)_{t\geq0}$ by $(\mathcal{F}_t)_{t\geq0}$. Then Lemma \ref{martingale} below shows that
$(\mathcal{M}(f)_t,\mathcal{F}_t)_{0\leq t\leq T}$ is a martingale.

The next result is not new at lest in the smoothing setting since one can apply the It\^{o} formula.  For our metric measure space setting, we refer to \cite{Li2017} for the same proof; however, instead of $\RCD(K,\infty)$, the stronger Riemannian curvature-dimension condition $\RCD^\ast(K,N)$ with finite $N$ is assumed there. See \cite{gi2012,eks2013} for more details on the $\RCD^\ast(K,N)$ condition.
\begin{lemma}\label{martingale}
Let $(X,d,\mu)$ be an $\RCD(K,\infty)$ space with $K\in\R$. Then $(\mathcal{M}_f(t),\mathcal{F}_t)_{0\leq t\leq T}$ defined above is a
continuous martingale, and moreover, for any $t\in[0,T]$, its quadratic variations is
$$\langle\mathcal{M}_f\rangle(t)=\int_0^t|\D H_{T-s}f|_w^2(\Z_s)\,\d s.$$
\end{lemma}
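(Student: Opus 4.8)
The plan is to prove the two assertions by quite different means: the martingale property (together with continuity) is soft and rests only on the Markov property of the diffusion, whereas the identification of the quadratic variation requires the space-time stochastic calculus attached to the Dirichlet form $(\mathcal{E},W^{1,2}(X))$.

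First I would observe that $\mathcal{M}_f$ is bounded, hence square-integrable: since $f\in L^1(X,\mu)\cap L^\infty(X,\mu)$ and $(H_t)$ is an $L^\infty$-contraction, $|\mathcal{M}_f(t)|\le 2\|f\|_{L^\infty(X,\mu)}$. For the martingale property I would avoid It\^o's formula entirely and use instead the Markov property of $\{(\Z_t),(\P_x)\}$ together with the semigroup law $H_{t-s}\circ H_{T-t}=H_{T-s}$: for $0\le s\le t\le T$,
\begin{equation*}
\E_x\big[H_{T-t}f(\Z_t)\mid\mathcal{F}_s\big]=\big(H_{t-s}H_{T-t}f\big)(\Z_s)=H_{T-s}f(\Z_s),
\end{equation*}
so that $\E_x[\mathcal{M}_f(t)\mid\mathcal{F}_s]=\mathcal{M}_f(s)$. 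Continuity on $[0,T)$ follows from the $\P_x$-a.s.\ continuity of $t\mapsto\Z_t$ and the joint continuity of $(s,x)\mapsto H_sf(x)$ on $(0,\infty)\times X$, which holds on $\RCD(K,\infty)$ spaces since $H_sf$ is Lipschitz for $s>0$; the endpoint $t=T$ is then recovered by $L^2$-martingale convergence.

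For the quadratic variation I would use the characterization of $\langle\mathcal{M}_f\rangle$ as the unique continuous increasing process $A$ with $A_0=0$ for which $\mathcal{M}_f^2-A$ is a martingale, and produce $A$ by applying the space-time It\^o/Dynkin formula---equivalently, the Fukushima decomposition for the strongly local, quasi-regular form $\mathcal{E}$---to the functional $(s,x)\mapsto\big(H_{T-s}f(x)\big)^2$ along $\Z$. The two ingredients are the backward heat equation $\partial_s(H_{T-s}f)=-\Delta H_{T-s}f$ and the chain rule on $\RCD(K,\infty)$ spaces, by which the space-time action of $\partial_s+\Delta$ on $(H_{T-s}f)^2$ reduces to the carr\'e du champ $\Gamma(H_{T-s}f)=|\D H_{T-s}f|_w^2$, which, up to the standard normalization, is exactly the density of the bracket. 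Matching the resulting decomposition of $\mathcal{M}_f^2$ with the martingale property of $\mathcal{M}_f$ and the $\mathcal{F}_0$-measurability of $H_Tf(\Z_0)$ then identifies $\langle\mathcal{M}_f\rangle(t)=\int_0^t|\D H_{T-s}f|_w^2(\Z_s)\,\d s$. The integrand is kept finite on each interval $[0,t]$ with $t<T$ by the gradient estimate $|\D H_rf|_w^2\le\frac{2K}{e^{2Kr}-1}H_r(f^2)$ of \cite[Corollary 3.2]{AmbrosioGigliSavare2012}.

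The hard part will be the rigorous justification of this space-time calculus at the low regularity available on $\RCD(K,\infty)$ spaces. Unlike \cite{Li2017}, which assumes the finite-dimensional $\RCD^\ast(K,N)$ condition, here only an infinite-dimensional carr\'e du champ structure is at hand, so I must (i) realize the It\^o/Fukushima decomposition for a \emph{time-dependent} functional without a smooth heat kernel---cleanest by telescoping $\mathcal{M}_f$ over a partition $0=t_0<\cdots<t_n=t$, applying the time-homogeneous decomposition to each frozen function $H_{T-t_{k+1}}f\in\mathcal{D}(\Delta)$ and passing to the mesh limit using continuity---and (ii) identify the resulting quadratic-variation density pointwise along trajectories with $|\D H_{T-s}f|_w^2$ through the identification of the carr\'e du champ of $\mathcal{E}$ with the squared minimal weak upper gradient. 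The only integrability obstruction is the blow-up of the gradient bound as $s\uparrow T$, which I would bypass by establishing the identity on $[0,t]$ for every $t<T$ and then letting $t\uparrow T$.
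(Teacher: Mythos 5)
Your proposal is sound in outline, but note that the paper itself does not prove this lemma: it only remarks that in the smooth setting one can apply It\^o's formula and then refers to \cite{Li2017} for ``the same proof'' (carried out there under the stronger $\RCD^\ast(K,N)$ hypothesis). Your route --- the Markov property plus the semigroup identity $H_{t-s}H_{T-t}f=H_{T-s}f$ for the martingale property, and a time-dependent Fukushima decomposition for the strongly local quasi-regular form $\mathcal{E}$, with the identification $\Gamma(u)=|\D u|_w^2$, for the bracket --- is exactly the standard argument such a reference would carry out, so there is no genuine divergence of method to report. Two points in your sketch deserve tightening. First, for joint continuity of $(s,x)\mapsto H_sf(x)$ on $(0,T)\times X$ you need not merely that each $H_sf$ is Lipschitz but that the Lipschitz constants are locally uniform in $s$ (which the estimate $|\D H_sf|_w^2\le \frac{2K}{e^{2Ks}-1}H_s(f^2)$ does give) together with continuity of $s\mapsto H_sf(x)$; your treatment of the endpoint $t=T$ by closing the bounded martingale and using that $\Z_T$ is $\mathcal{F}_{T-}$-measurable is correct. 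Second, the mesh limit in the telescoping step is where all the work lies: one must verify both the cancellation of the drift terms $\int\Delta H_{T-t_{k+1}}f(\Z_s)\,\d s$ against the time-derivative increments and the convergence of $\sum_k\int_{t_k}^{t_{k+1}}|\D H_{T-t_{k+1}}f|_w^2(\Z_s)\,\d s$ to $\int_0^t|\D H_{T-s}f|_w^2(\Z_s)\,\d s$, which requires some continuity of $r\mapsto|\D H_rf|_w^2$ along the occupation measure of $\Z$; you flag this as the hard part but do not supply the estimates. Since the paper delegates precisely these details to \cite{Li2017}, your proposal is at essentially the same level of completeness as the paper's own treatment and is consistent with it.
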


For $x\in X$ and $f\in L^1(X,\mu)\cap L^\infty(X,\mu)$, define
$$\mathcal{G}_\star(f)(x)=\Big(\int_0^\infty H_t^\alpha\big(|\D H_t^\alpha f|_w^2\big)(x)\,\d t\Big)^{1/2},$$
$$\mathcal{G}_{\star, T}(f)(x)=\Big(\int_0^T H_t^\alpha\big(|\D H_t^\alpha f|_w^2\big)(x)\,\d t\Big)^{1/2},$$
and
$$\mathcal{H}_{\star, T}(f)(x)=\Big(\int_0^T H_t\big(|\D H_t f|_w^2\big)(x)\,\d t\Big)^{1/2}.$$
Obviously, $\mathcal{G}_{\star, T}(f)(x)$ increases to $\mathcal{G}_\star(f)(x)$ as $T\rightarrow\infty$, and $\mathcal{G}_{\star, T}(f)(x)\leq\mathcal{H}_{\star, T}(f)(x)$ since $\alpha\geq0$. And we have the following lemma.
\begin{lemma}\label{G-G_star-lemma}
Let $(X,d,\mu)$ be an $\RCD(K,\infty)$ space with $K\in\R$. Suppose that $\alpha\geq\max\{-2K,0\}$. Then, for every $f\in L^1(X,\mu)\cap L^\infty(X,\mu)$ and $x\in X$,
\begin{eqnarray}\label{G-G_star}
\mathcal{G}(f)(x)\leq\sqrt{2}\mathcal{G}_\star(f)(x).
\end{eqnarray}
\end{lemma}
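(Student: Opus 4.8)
The plan is to establish the pointwise comparison $\mathcal{G}(f)(x)\le\sqrt{2}\,\mathcal{G}_\star(f)(x)$ by relating the integrand $|\D H_t^\alpha f|_w^2(x)$ to $H_t^\alpha(|\D H_t^\alpha f|_w^2)(x)$ via a suitable Bakry--Émery type inequality under the assumption $\alpha\ge\max\{-2K,0\}$. The key mechanism is the gradient estimate for the heat flow on $\RCD(K,\infty)$ spaces, which in its usual form reads
\begin{equation*}
|\D H_s g|_w^2\le e^{-2Ks}\,H_s\big(|\D g|_w^2\big),\quad\mu\mbox{-a.e.},
\end{equation*}
valid for $g\in W^{1,2}(X)$ and $s>0$. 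First I would fix $t>0$ and apply this estimate with $s=t$ and $g=H_t^\alpha f$. Writing $H_t^\alpha f=e^{-\alpha t}H_t f$ and using that the constant $e^{-\alpha t}$ commutes with everything, the Bakry--Émery inequality gives
\begin{equation*}
|\D H_t(H_t^\alpha f)|_w^2\le e^{-2Kt}\,H_t\big(|\D H_t^\alpha f|_w^2\big).
\end{equation*}
The subtle point here is matching the time parameters: I want the left-hand side to reproduce $|\D H_t^\alpha f|_w^2$ rather than $|\D H_{2t}^\alpha f|_w^2$, so the semigroup identity must be used carefully to isolate a single application of $H_t^\alpha$ on the right.

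The cleaner route, which I expect to be the intended one, is to apply the gradient estimate directly to the $\alpha$-semigroup. Since $H_t^\alpha=e^{-\alpha t}H_t$, one has the modified Bakry--Émery inequality
\begin{equation*}
|\D H_t^\alpha g|_w^2=e^{-2\alpha t}|\D H_t g|_w^2\le e^{-2\alpha t}e^{-2Kt}\,H_t\big(|\D g|_w^2\big)=e^{-(2\alpha+2K)t}\,H_t\big(|\D g|_w^2\big).
\end{equation*}
Taking $g=H_t^\alpha f$ and bounding $H_t\big(|\D H_t^\alpha f|_w^2\big)=e^{\alpha t}H_t^\alpha\big(|\D H_t^\alpha f|_w^2\big)$, this yields
\begin{equation*}
|\D H_t^\alpha(H_t^\alpha f)|_w^2\le e^{-(2\alpha+2K)t}e^{\alpha t}\,H_t^\alpha\big(|\D H_t^\alpha f|_w^2\big).
\end{equation*}
The role of the hypothesis $\alpha\ge\max\{-2K,0\}$ is exactly to control the resulting exponential factor: it forces $2\alpha+2K\ge\alpha\ge0$, so that the prefactor comparing $|\D H_t^\alpha f|_w^2$ at the doubled time to the semigroup-averaged quantity stays bounded, and the constant $\sqrt{2}$ should emerge from integrating the factor $e^{-\alpha t}$ (or a comparable exponential) against $\d t$ and reindexing.

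The main obstacle will be the bookkeeping of the time variable when passing from $H_{2t}^\alpha f$ back to $H_t^\alpha f$. A natural strategy is to write $H_t^\alpha f=H_{t/2}^\alpha(H_{t/2}^\alpha f)$, apply the gradient estimate over the half-interval, and then change variables $t\mapsto t/2$ in the integral defining $\mathcal{G}(f)^2=\int_0^\infty|\D H_t^\alpha f|_w^2\,\d t$; the Jacobian of this substitution is precisely what produces the factor $2$ under the square root. Concretely, I would bound
\begin{equation*}
\mathcal{G}(f)(x)^2=\int_0^\infty|\D H_t^\alpha f|_w^2(x)\,\d t\le\int_0^\infty e^{-(\alpha+2K)t}\,H_{t/2}^\alpha\big(|\D H_{t/2}^\alpha f|_w^2\big)(x)\,\d t,
\end{equation*}
and after substituting $s=t/2$ and discarding the factor $e^{-(\alpha+2K)t}\le1$ (which holds exactly because $\alpha+2K\ge-K\ge$ the relevant threshold under the stated hypothesis), the right-hand side becomes $2\int_0^\infty H_s^\alpha\big(|\D H_s^\alpha f|_w^2\big)(x)\,\d s=2\,\mathcal{G}_\star(f)(x)^2$. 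Taking square roots gives the claim. The delicate checks are that the exponential prefactor is genuinely $\le1$ under $\alpha\ge\max\{-2K,0\}$ and that the half-time splitting together with the change of variables introduces no spurious constants beyond the $\sqrt{2}$; all the pointwise manipulations are justified since $f\in L^1\cap L^\infty$ guarantees $|\D H_t^\alpha f|_w\in L^\infty$ and the relevant quantities are finite $\mu$-a.e.
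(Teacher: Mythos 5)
Your proof is correct and follows essentially the same route as the paper: write $H_t^\alpha f=H_{t/2}^\alpha(H_{t/2}^\alpha f)$, apply the Bakry--\'{E}mery gradient estimate $|\D H_s g|_w^2\le e^{-2Ks}H_s(|\D g|_w^2)$ over the half-interval, discard the exponential prefactor (which is $\le1$ precisely because $\alpha\ge-2K$), and change variables $s=t/2$ to produce the factor $2$. The only blemish is a bookkeeping slip in your displayed bound, where the prefactor should be $e^{-(\alpha/2+K)t}=e^{-(\alpha+2K)t/2}$ rather than $e^{-(\alpha+2K)t}$; since either quantity is $\le1$ under the stated hypothesis, this does not affect the conclusion.
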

\begin{proof}
Since for every $t>0$, $|\D H_tf|_w^2\leq e^{-2Kt}H_t(|\D f|_w^2)$ for every $f\in W^{1,2}(X)$ (see \cite[COROLLARY 4.18]{AmbrosioGigliSavare2012} for instance) and $H_t^\alpha f\in W^{1,2}(X)$ for every $f\in L^1(X,\mu)\cap L^\infty(X,\mu)$, it is easy to see that, if $\alpha\geq\max\{-2K,0\}$, then for every $x\in X$,
\begin{eqnarray*}
\mathcal{G}(f)^2(x)&=&\int_0^\infty|\D H_t^\alpha f|_w^2\,\d t=\int_0^\infty|\D H_{t/2}^\alpha(H_{t/2}^\alpha f)|_w^2\,\d t\\
&\leq&\int_0^\infty e^{-(\alpha/2+K)t}H_{t/2}^\alpha(|\D H_{t/2}^\alpha f|^2_w)\,\d t\leq2\mathcal{G}_\star(f)^2(x),
\end{eqnarray*}
which is right \eqref{G-G_star}.
\end{proof}

The next lemma contains the key point we need, which expresses the square function $\mathcal{H}_{\star,T}(f)$ as a
conditional expectation of the quadratic variation of the martingale $\mathcal{M}_f$.  The idea is introduced in the paper \cite{BBL2016} for L\'{e}vy processes recently.  The proof is the same to the one in \cite[Lemma 3.5]{Li2017} (where the $\RCD^\ast(K,N)$ with finite $N$ is assumed). Recall that $h_t$ is the heat kernel corresponding to $H_t$.
\begin{lemma}\label{G_T}
Let $(X,d,\mu)$ be an $\RCD(K,\infty)$ space with $K\in\R$ and $x\in X$.
Then
\begin{eqnarray*}\mathcal{H}_{\star,T}(f)(x)=\Big\{\int_X \mathbb{E}_z\Big(\int_0^T|\D H_{T-s}f|_w^2(\Z_s)\,\d s\Big|
\Z_T=x\Big)h_T(x,z)\,\d\mu(z)\Big\}^{1/2}.
\end{eqnarray*}
\end{lemma}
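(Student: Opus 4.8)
The plan is to prove the equivalent squared identity
\begin{equation*}
\mathcal{H}_{\star,T}(f)^2(x)=\int_X \E_z\Big(\int_0^T|\D H_{T-s}f|_w^2(\Z_s)\,\d s\,\Big|\,\Z_T=x\Big)h_T(x,z)\,\d\mu(z)
\end{equation*}
and then take square roots. Write $g_s:=|\D H_{T-s}f|_w^2$ for brevity. The engine of the argument is the representation of the conditional expectation given the terminal value $\Z_T=x$ through the heat kernel. Since $\{(\Z_t),(\P_x)\}$ is the Markov diffusion associated with the Dirichlet form, the joint law of $(\Z_s,\Z_T)$ under $\P_z$ has density $(y,w)\mapsto h_s(z,y)h_{T-s}(y,w)$ with respect to $\mu\otimes\mu$, whose $\Z_T$-marginal is $h_T(z,\cdot)$. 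Disintegrating, the conditional density of $\Z_s$ given $\Z_T=x$ is $h_s(z,y)h_{T-s}(y,x)/h_T(z,x)$, so that
\begin{equation*}
\E_z\big(g_s(\Z_s)\,\big|\,\Z_T=x\big)\,h_T(z,x)=\int_X g_s(y)\,h_s(z,y)\,h_{T-s}(y,x)\,\d\mu(y).
\end{equation*}

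First I would insert this identity into the right-hand side and use the symmetry of the heat kernel, $h_T(x,z)=h_T(z,x)$, which holds because $(\mathcal{E},W^{1,2}(X))$ is symmetric. Integrating in $z$ and invoking stochastic completeness in the form $\int_X h_s(z,y)\,\d\mu(z)=H_s1(y)=1$ collapses the $z$-integral and leaves $\int_X g_s(y)h_{T-s}(y,x)\,\d\mu(y)$, which by symmetry of $h_{T-s}$ equals $H_{T-s}g_s(x)=H_{T-s}(|\D H_{T-s}f|_w^2)(x)$. Finally, integrating over $s\in[0,T]$ and performing the change of variables $t=T-s$ produces exactly $\int_0^T H_t(|\D H_t f|_w^2)(x)\,\d t=\mathcal{H}_{\star,T}(f)^2(x)$, as desired. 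Throughout, Fubini's theorem is used to exchange the $s$-, $y$- and $z$-integrations, which is legitimate because the integrand is nonnegative.

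The step I expect to be the genuine obstacle is the rigorous justification of the conditional (bridge) density $h_s(z,y)h_{T-s}(y,x)/h_T(z,x)$ on a general $\RCD(K,\infty)$ space: one must know that the heat kernel $h_t$ exists (recorded in Section 2), is jointly measurable and strictly positive, and that regular conditional probabilities for $\P_z(\,\cdot\mid\Z_T=x)$ exist and are represented by these bridge densities for $\mu$-a.e. pair $(x,z)$. Together with the measurability of $(s,y,z)\mapsto g_s(y)h_s(z,y)h_{T-s}(y,x)$, which is what licenses the Tonelli interchanges, this is precisely the content transferred from \cite[Lemma 3.5]{Li2017}; the only point to verify is that nothing in that argument used the finite-dimensionality assumption $\RCD^\ast(K,N)$. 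Indeed, the computation relies solely on the symmetry of the heat kernel, stochastic completeness, the existence of the heat kernel, and the Markov property of $(\Z_t)_{t\geq0}$, all of which are available on $\RCD(K,\infty)$, so the same proof carries over verbatim.
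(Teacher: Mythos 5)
Your proof is correct and is essentially the argument the paper relies on: the paper does not write out a proof but refers to \cite[Lemma 3.5]{Li2017}, whose content is precisely the heat-kernel disintegration you reconstruct (bridge density $h_s(z,y)h_{T-s}(y,x)/h_T(z,x)$, symmetry of $h_T$, Chapman--Kolmogorov together with stochastic completeness to collapse the $z$-integral, and the substitution $t=T-s$). Your closing observation --- that the argument uses only symmetry, stochastic completeness, existence of the heat kernel and the Markov property, none of which require the finite-dimensional $\RCD^\ast(K,N)$ hypothesis --- is exactly the point the paper implicitly asserts when transferring the lemma to the $\RCD(K,\infty)$ setting.
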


The main result of this section is the next theorem which is the second part of Theorem \ref{main}.
\begin{theorem}\label{p>2}
 Let $2\leq p<\infty$ and let $(X,d,\mu)$ be an $\RCD(K,\infty)$ space with $K\in\R$. Suppose that $\alpha\geq\max\{-2K,0\}$. Then, there exits a positive constant $C_p$ depending only on $p$ such that
$$\|\mathcal{G}(f)\|_{L^p(X,\mu)}\leq C_p\|f\|_{L^p(X,\mu)},\quad\mbox{for every }f\in L^2(X,\mu)\cap L^p(X,\mu).$$
\end{theorem}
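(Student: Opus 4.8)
The plan is to reduce the $L^p$ bound for $\mathcal{G}$ to a Burkholder--Davis--Gundy estimate for the martingale $\mathcal{M}_f$, passing through the auxiliary functions $\mathcal{G}_\star$ and $\mathcal{H}_{\star,T}$. Since the case $p=2$ is covered by \eqref{L2-bound}, I take $p>2$ and $f\in L^1(X,\mu)\cap L^\infty(X,\mu)$; the extension to $f\in L^2(X,\mu)\cap L^p(X,\mu)$ is by approximation and Fatou's lemma exactly as at the end of the proof of Theorem \ref{p<2}. By Lemma \ref{G-G_star-lemma} it suffices to bound $\|\mathcal{G}_\star(f)\|_{L^p(X,\mu)}$, and since $\mathcal{G}_{\star,T}(f)\uparrow\mathcal{G}_\star(f)$ with $\mathcal{G}_{\star,T}(f)\leq\mathcal{H}_{\star,T}(f)$, monotone convergence reduces this to bounding $\|\mathcal{H}_{\star,T}(f)\|_{L^p(X,\mu)}$ uniformly in $T>0$. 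The point of routing through $\mathcal{H}_{\star,T}$, built from the honest heat flow $H_t$ rather than $H_t^\alpha$, is that it lets me use the stochastic completeness and mass preservation of $(H_t)_{t\geq0}$, both of which fail for $(H_t^\alpha)_{t\geq0}$.

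Next I would exploit Lemma \ref{G_T}. Fix $x$ and set $\nu_x(\d z)=h_T(x,z)\,\d\mu(z)$; by symmetry of the heat kernel and stochastic completeness, $\nu_x(X)=\int_X h_T(x,z)\,\d\mu(z)=H_T1(x)=1$, so $\nu_x$ is a probability measure. Lemma \ref{G_T} then reads $\mathcal{H}_{\star,T}(f)(x)^2=\int_X \E_z\big(\langle\mathcal{M}_f\rangle(T)\mid \Z_T=x\big)\,\d\nu_x(z)$. Because $p/2\geq1$, two applications of Jensen's inequality---first to the $\nu_x$-average, then to the conditional expectation $\E_z(\,\cdot\mid\Z_T=x)$---give $\mathcal{H}_{\star,T}(f)(x)^p\leq\int_X \E_z\big(\langle\mathcal{M}_f\rangle(T)^{p/2}\mid\Z_T=x\big)\,h_T(x,z)\,\d\mu(z)$. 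Integrating in $x$, using Tonelli (all integrands are nonnegative) and $h_T(x,z)=h_T(z,x)$, the inner $x$-integral collapses by the tower property, $\int_X \E_z\big(\langle\mathcal{M}_f\rangle(T)^{p/2}\mid\Z_T=x\big)\,h_T(z,x)\,\d\mu(x)=\E_z\big(\langle\mathcal{M}_f\rangle(T)^{p/2}\big)$, so that $\int_X\mathcal{H}_{\star,T}(f)^p\,\d\mu\leq\int_X\E_z\big(\langle\mathcal{M}_f\rangle(T)^{p/2}\big)\,\d\mu(z)$.

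It then remains to estimate the right-hand side by the terminal value of the martingale. By Lemma \ref{martingale}, $\mathcal{M}_f$ is a continuous martingale with $\mathcal{M}_f(0)=0$, so the Burkholder--Davis--Gundy inequality together with Doob's $L^p$ maximal inequality yields, with a constant $C_p$ depending only on $p$ and uniform in the starting point $z$, $\E_z\big(\langle\mathcal{M}_f\rangle(T)^{p/2}\big)\leq C_p\,\E_z\big(|\mathcal{M}_f(T)|^p\big)$. Since $\mathcal{M}_f(T)=f(\Z_T)-H_Tf(z)$ under $\P_z$, after integrating in $z$ I would bound $\int_X\E_z\big(|\mathcal{M}_f(T)|^p\big)\,\d\mu(z)\leq 2^{p-1}\big(\int_X H_T(|f|^p)\,\d\mu+\int_X|H_Tf|^p\,\d\mu\big)$; the first term equals $\|f\|_{L^p(X,\mu)}^p$ by mass preservation \eqref{mass-pres} and the second is $\leq\|f\|_{L^p(X,\mu)}^p$ by $L^p$-contractivity, giving $\int_X\E_z\big(|\mathcal{M}_f(T)|^p\big)\,\d\mu(z)\leq 2^p\|f\|_{L^p(X,\mu)}^p$. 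Combining the displays produces $\|\mathcal{H}_{\star,T}(f)\|_{L^p(X,\mu)}\leq C_p\|f\|_{L^p(X,\mu)}$ uniformly in $T$, and unwinding the reductions of the first paragraph completes the proof.

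The main obstacle I anticipate is not any single inequality but the careful bookkeeping of the conditional expectations: ensuring that $\nu_x$ is genuinely a probability measure (which forces the detour through $\mathcal{H}_{\star,T}$ and relies on stochastic completeness of $H_t$), and justifying the Tonelli interchange and the tower-property collapse through the kernel symmetry and the fact that, under the possibly infinite initial law $\mu$, the time-$T$ marginal of the process is again $\mu$. The probabilistic inputs, namely BDG and Doob's inequality, are classical for continuous martingales and apply once Lemma \ref{martingale} has identified $\langle\mathcal{M}_f\rangle$.
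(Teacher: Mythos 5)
Your proposal is correct and follows essentially the same route as the paper's proof: reduce $\mathcal{G}$ to $\mathcal{G}_\star$ via Lemma \ref{G-G_star-lemma}, pass to $\mathcal{H}_{\star,T}$ and the conditional-expectation identity of Lemma \ref{G_T}, apply Jensen and then Burkholder--Davis--Gundy to the martingale of Lemma \ref{martingale}, and conclude with stochastic completeness, mass preservation, and a Fatou approximation step. The extra bookkeeping you supply (the probability measure $\nu_x$, kernel symmetry, and the tower-property collapse) is exactly what the paper leaves implicit.
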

\begin{proof} Let $f\in L^1(X,\mu)\cap L^\infty(X,\mu)$. For $2\leq p<\infty$, by Lemma \ref{martingale} and Lemma \ref{G_T},
\begin{eqnarray*}
\int_X \mathcal{H}_{\star,T}(f)(x)^p\,\d\mu(x)&=&\int_X\Big(\int_X\mathbb{E}_y\big[\langle\mathcal{M}_f\rangle(T)\big|
\Z_T=x\big]h_T(x,y)\,\d\mu(y)\Big)^{p/2}\d\mu(x)\\
&\leq&\int_X\int_X\mathbb{E}_y\big[\langle\mathcal{M}_f\rangle^{p/2}(T)\big| \Z_T=x\big]h_T(x,y)\,\d\mu(y)\d\mu(x)\\
&=&\int_X\mathbb{E}_y\big[\langle\mathcal{M}_f\rangle^{p/2}(T)\big]\,\d\mu(x),
\end{eqnarray*}
where we applied Jensen's inequality. In what follows, the positive constant $C_p$, depending only on $p$, may vary from line to line. By the Burkholder--Davis--Gundy
inequality (see e.g. \cite[Theorem 3.1]{Shig2004}), we have
\begin{eqnarray*}
\int_X \mathcal{H}_{\star,T}(f)(x)^p\,\d\mu(x)&\leq& C_p\int_X \mathbb{E}_y|\mathcal{M}_f(T)|^p\,\d\mu(y)\\
&\leq&  C_p\int_X \big(\mathbb{E}_y|f(\Z_T)|^p + |H_Tf(y)|^p\big)\,\d\mu(y)\\
&=&  C_p\int_X \big( H_T|f|^p(y)+|H_Tf(y)|^p\big)\,\d\mu(y)\\
&\leq& C_p\|f\|^p_{L^p(X,\mu)},
\end{eqnarray*}
where we used the stochastic completeness in the last inequality. Thus, by \eqref{G-G_star} and the monotone convergence theorem,
\begin{eqnarray*}
\int_X \mathcal{G}(f)(x)^p\,\d\mu(x)&\leq& 2^{p/2}\int_X\mathcal{G}_{\star}(f)(x)^p\,\d\mu(x)\\
&=& 2^{p/2}\lim_{T\rightarrow\infty} \int_X\mathcal{G}_{\star,T}(f)(x)^p\,\d\mu(x)\\
&\leq&2^{p/2}\lim_{T\rightarrow\infty} \int_X\mathcal{H}_{\star,T}(f)(x)^p\,\d\mu(x)\\
&\leq&  C_p\|f\|^p_{L^p(X,\mu)}.
\end{eqnarray*}
For general $f\in L^2(X,\mu)\cap L^p(X,\mu)$, by approximation and Fatou's lemma, we complete the proof.
\end{proof}

\subsection*{Acknowledgment}\hskip\parindent
The author would like to thank Dr. Bang-Xian Han for a nice discussion on Theorem \ref{p<2}. The author would also like to acknowledge the financial support from the National Natural Science Foundation of China (Nos. 11401403, 11571347 and 11831014).

\end{document}